\newtheorem{theorem}{Theorem}
\newtheorem{lemma}{Lemma}
\newtheorem{proposition}{Proposition}
\newtheorem{remark}{Remark}
\newtheorem{definition}{Definition}
\newtheorem{corollary}{Corollary}
\begin{document}

\begin{frontmatter}
%\runtitle{Insert a suggested running title}  % Running title for regular 
                                              % papers but only if the title  
                                              % is over 5 words. Running title 
                                              % is not shown in output.

\title{On the mixed monotonicity of polynomial functions\thanksref{footnoteinfo}} % Title, preferably not more 
                                                % than 10 words.

\thanks[footnoteinfo]{This paper was not presented at any IFAC 
meeting. Corresponding author A.M.Tahir. }

\author[UW]{Adam M Tahir}\ead{aerotahir@gmail.com} % (ead) as shown

\address[UW]{William E. Boeing Department of Aeronautics and Astronautics, University of Washington, Seattle, WA, USA}  % Please supply                                              
% here.

\begin{keyword}                           % Five to ten keywords,  
Polynomial Systems, Mixed Monotonicity, Reachable Sets, Interval Observers,  Semidefinite Programming             % chosen from the IFAC 
\end{keyword}                             % keyword list or with the 
                                          % help of the Automatica 
                                          % keyword wizard

\begin{abstract}                          % Abstract of not more than 200 words.
In this paper, it is shown that every polynomial function is mixed monotone globally with a polynomial decomposition function. For univariate polynomials, the decomposition functions can be constructed from the Gram matrix representation of polynomial functions. The tightness of polynomial decomposition functions is discussed. Several examples are provided. An example is provided to show that polynomial decomposition functions, in addition to being global decomposition functions, can be much tighter than local decomposition functions constructed using local Jacobian bounds. Furthermore, an example is provided to demonstrate the application to reachable set over-approximation. 
\end{abstract}

\end{frontmatter}

\section{Introduction}

Mixed monotonicity can be seen as a generalization of monotonicity. A function is mixed monotone if it can be decomposed into a function with two arguments that is monotonically increasing in the first argument and monotonically decreasing in the second argument. The two argument function is referred to as the mixed monotone decomposition function.   In general, it is difficult to compute the decomposition function of a function to show that it is mixed monotone. To compute mixed monotone decomposition functions, (local or global) properties of classes of functions are exploited. For example, (local or global) Jacobian bounds are frequently exploited to compute mixed monotone decomposition functions for differentiable functions.  %Yang {\it et al.}  showed that mixed monotonicity is a relatively generic property in the sense that local decompositions of continuously differentiable functions can be computed based on local Jacobian bounds. Moreover, 

There are many important applications of mixed monotonicity. For example, a major application is in efficient computation of over-approximations of reachable sets by intervals \cite{CooganFinitieAbstraction,9304391}. The efficiency of computation comes from the fact that the over-approximation of a reachable set can be achieved by propagating two points (the upper and lower bounds of the interval containing the initial condition) forward in time with dynamics that include mixed monotone decompositions which introduce coupling between the dynamics of the lower bound and the dynamics of the upper bound. The coupling ensures that the upper bound always remains an upper bound and the lower bound always remains a lower bound. Hence, an over-approximation of the reachable set with an interval follows. 

% Monotonicity occurs inherently in many biological systems \cite{MonotoneControl}; however, many systems of interest, such as mechanical systems are not inherently monotone. To overcome this limitation,

Another important application of mixed monotonicity is in interval observer synthesis. Interval observers are set-based state estimators which provide a bounded interval to which the systems state is guaranteed to belong to. Interval observers are designed to ensure monotonicity of the upper and lower bounds of the interval  (i.e. the upper bound always remains an upper bound and the lower bound always remains a lower bound) in addition to stability of the bounds. Mixed monotonicity is exploited in interval observer synthesis in a similar way to how it is used in reachable set prediction \cite{IntervalLip,IntervalEstimationNonlin,Me,LCSS_paper,khaj}.% In these references, it is typically assumed that the decomposition function is computed from  the Jacobian bounds. 

The decomposition function is not unique, which has consequences for the aforementioned applications. It is desirable to find a so-called tight (in a sense to be made more precise later on) decomposition function so that the interval estimate in an interval observer is as small as possible. Moreover, it is desirable to compute the smallest over-approximation of the reachable set, which can also be accomplished by finding a tight decomposition function.

Polynomial functions occur frequently in control theory. In this paper, the Gram matrix representation of polynomials, where a polynomial is expressed in a quadratic form, is exploited to compute mixed monotone decompositions of univariate polynomial functions. Using the Gram matrix representation naturally leads to semidefinite programming problems \cite{Parrilo:2003aa,PolynomialLMIs}. In constructing a mixed monotone decomposition, the semidefinite program is related to the tightness of the decomposition function. 

The contribution of this paper is that it is shown that every polynomial function is mixed monotone globally with a polynomial decomposition function.  Moreover, a procedure to construct the polynomial decomposition function is presented. Examples are provided which show the tightness of the derived decomposition functions and show an application to reachable set over-approximation.% for a polynomial system. 

\section{Preliminaries}
\subsection{Notation}
The set of real numbers is denoted by  $\mathbb{R}$, $\mathbb{R}^n$ denotes the set of $n$-dimensional real vectors, $\mathbb{R}^{n\times m}$ denotes the set of real $n\times m$ matrices, and $\mathbb{N}_0$ denotes the set of natural numbers including 0. The $i$th element of the canonical basis for $\mathbb{R}^n$ is denoted by $e_i$.

The transpose of a vector $x\in\mathbb{R}^n$ is denoted by $x^\top$. The set of  symmetric matrices of dimension $n\times n$ is denoted by $\mathbb{S}^n$. A matrix $M\in\mathbb{S}^n$ is positive (resp. negative) semidefinite if $x^\top Mx\ge 0$ (resp. $\le 0$) for all $x\in\mathbb{R}^n$. The notation $M\succeq 0$ (resp. $\preceq 0$) denotes that $M$ is positive (resp. negative) semidefinite. For a vector $x\in\mathbb{R}^n$, $\|x\|$ denotes the Euclidean norm of $x$. 

For a square matrix $M\in\mathbb{R}^{n\times n}$, the Frobenius norm of $M$ is denoted by $\|M\|_F$ and the 1-matrix norm is denoted by $\|M\|_1$.  

The derivative of $f(x):\mathbb{R}\to\mathbb{R}$ is denoted by $f'(x)$ and the indefinite integral of $f(x)$ is denoted by $\int f(x)dx$. 

An interval with lower bound $a$ and upper bound $b$ is denoted by $[a,b]$. 
\subsection{Polynomials and the Gram matrix representation}

\begin{definition}[Definition 2.2 in \cite{Parrilo:2003aa}]
A polynomial $p$ in $x\in\mathbb{R}^n$ is a finite linear combination of monomials:
\begin{align*}
p(x) = \sum_\alpha c_\alpha x^\alpha =\sum_\alpha c_\alpha x_1^{\alpha_1}\dots x_n^{\alpha_n}, \;\; c_\alpha\in\mathbb{R},
\end{align*}
where the sum is over a finite number of $n$-tuples $\alpha = (\alpha,\dots, \alpha_n)$, $\alpha_i\in\mathbb{N}_0$. A polynomial is called a univariate polynomial if $n=1$. Otherwise, it is called a multivariate polynomial.
\end{definition}
 The total degree of the monomial $x^\alpha$ is $\sum_{i=1}^n\alpha_i$. The total degree of a polynomial is the highest degree of its monomials.

A univariate polynomial $p$ of degree $d$ can be represented in a quadratic form\footnote{Note that multivariate plynomials can also be expressed using the Gram matrix representation, but in this paper the Gram matrix representation is exploited only in the univariate polynomial case.} as follows \cite{Parrilo:2003aa,PolynomialLMIs}:
\begin{align*}
p(x) = {x^{\{\sigma\}}}^\top(G+L(\alpha)){x^{\{\sigma\}}},
\end{align*}
where  $G\in\mathbb{S}^\sigma$, $x^{\{\sigma\}}$ is a vector of monomials of $x$ up to degree $\sigma$, that is, 
\begin{align*}
x^{\{\sigma\}} = \begin{bmatrix}1 & x & \dots & x^{\sigma-1} & x^\sigma\end{bmatrix}^\top,
\end{align*}
the degree $\sigma$ is related to the degree $d$ by
\begin{align*}
\sigma = \left\{\begin{array}{lr}
\frac{d}{2}, & d \text{ is even},\\
\frac{d+1}{2}, & d \text{ is odd},
\end{array}\right.
\end{align*}
and $L(\alpha)$ is a linear parameterization of the set 
\begin{align}
\mathcal{L} = \left\{L\in\mathbb{S}^{\sigma}:  {x^{\{\sigma\}}}^\top L{x^{\{\sigma\}}}=0, \forall x\in\mathbb{R}\right\}.\label{e:Lset}
\end{align}
The matrix $G+L(\alpha)$ is referred to as the Gram matrix for the polynomial $p(x)$. 
%Further details about the Gram representation can be found in .
%This representation is referred to as the Gram representation of the polynomial $p$.

%The derivative of a polynomial is a polynomial. Hence, using the Gram representation, the derivative of $p(x)$ can be expressed as follows:
%\begin{align}
%p'(x) ={x^{\{\sigma\}}}^\top\left(G\mathfrak{D}_\sigma +\mathfrak{D}_\sigma^\top G+L(\alpha)\right){x^{\{\sigma\}}},\label{e:derivative}
%\end{align}
%where $L(\alpha)$ belongs to the set $\mathcal{L}$ defined in \eqref{e:Lset}, 
%\begin{align}
%\mathfrak{D}_\sigma = \begin{bmatrix}0_{1\times \sigma} & 0\\ \Delta_\sigma & 0_{\sigma} \end{bmatrix}, \label{e:deeznutz}
%\end{align}
%and 
%\begin{align}
%\Delta_\sigma =\diag(1,2,\dots,\sigma-1,\sigma).
%\end{align}
%The expression \eqref{e:derivative} is derived by using the `power rule' for differentiating polynomials. 
\subsection{Linear algebra}
The following lemma which guarantees the existence of a decomposition of any symmetric matrix into a difference of  two a positive semidefinite matrices will be used in proving the main result of this paper. 
\begin{lemma}\label{emma}
Let $A\in\mathbb{S}^n$. There exist matrices $U,V\succeq 0$ such that $A=U-V$.
\end{lemma}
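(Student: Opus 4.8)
The plan is to use the spectral decomposition of the symmetric matrix $A$. Since $A\in\mathbb{S}^n$ is symmetric and real, the spectral theorem guarantees that $A$ admits an orthogonal diagonalization $A = Q\Lambda Q^\top$, where $Q$ is an orthogonal matrix whose columns are the orthonormal eigenvectors of $A$, and $\Lambda = \diag(\lambda_1,\dots,\lambda_n)$ is the diagonal matrix of the (real) eigenvalues of $A$. This is the natural starting point because it converts the problem of splitting $A$ into a problem of splitting a diagonal matrix, where positive and negative parts separate cleanly.

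The key step is to split the eigenvalues by sign. Define $\Lambda^+ = \diag(\max(\lambda_i,0))$ and $\Lambda^- = \diag(\max(-\lambda_i,0))$, so that each entry of $\Lambda^+$ and $\Lambda^-$ is nonnegative and $\Lambda = \Lambda^+ - \Lambda^-$. I would then set $U = Q\Lambda^+ Q^\top$ and $V = Q\Lambda^- Q^\top$. Both $U$ and $V$ are symmetric by construction, and a short computation gives $U - V = Q(\Lambda^+ - \Lambda^-)Q^\top = Q\Lambda Q^\top = A$, which is the desired identity.

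It remains to verify that $U,V\succeq 0$. For any $x\in\mathbb{R}^n$, writing $y = Q^\top x$ gives $x^\top U x = y^\top \Lambda^+ y = \sum_i (\Lambda^+)_{ii}\, y_i^2 \ge 0$, since every diagonal entry of $\Lambda^+$ is nonnegative; the same argument applied to $\Lambda^-$ shows $x^\top V x \ge 0$. Hence $U\succeq 0$ and $V\succeq 0$, completing the proof.

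I do not anticipate a genuine obstacle here, as the result is an elementary consequence of the spectral theorem; the only point requiring minor care is confirming that the eigenvalues of a real symmetric matrix are real (so that the sign-splitting of $\Lambda$ is well-defined) and that $Q$ can be taken orthogonal, both of which are standard. An alternative, should one wish to avoid invoking eigendecompositions, would be to pick any scalar $c \ge 0$ with $cI \succeq A$ (for instance $c = \|A\|_F$, which bounds the spectral radius) and write $A = cI - (cI - A)$ with $U = cI \succeq 0$ and $V = cI - A \succeq 0$; this is perhaps the cleanest route and sidesteps diagonalization entirely.
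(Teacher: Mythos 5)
Your proof is correct and takes essentially the same route as the paper: both split the spectral decomposition of $A$ by eigenvalue sign, yours written as $Q\Lambda^{\pm}Q^\top$ and the paper's as sums of rank-one terms $\max(\lambda_i,0)u_iu_i^\top$ and $-\min(\lambda_i,0)u_iu_i^\top$, which are the same matrices. The alternative $A = cI - (cI-A)$ you mention at the end is a genuinely different (and even more elementary) argument, but your main proof matches the paper's.
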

\begin{proof}
The matrix $A$ can be decomposed as $A = \sum_{i=0}^n \lambda_i u_iu_i^\top,$ where $\lambda_i$ is the $i$th eigenvalue of $A$, and $u_i$ is the corresponding eigenvector (note that since $A$ is symmetric, all of its eigenvalues and eigenvectors are real). By taking $U = \sum_{i=1}^n \max(\lambda_i,0) u_iu_i^\top$ and $V = -\sum_{i=1}^n \min(\lambda_i,0) u_iu_i^\top$ the result follows.
\end{proof}
\begin{remark}\label{r:uniqueness}
Note that the decomposition of a symmetric matrix into a difference of  two a positive semidefinite matrices is not unique. This can easily be seen from the fact that if $A=U-V$ where $U,V\succeq 0$ , then $A=\tilde{U}-\tilde{V}$ where $\tilde{U}=U+R\succeq 0$ and $\tilde{V}=V+R\succeq 0$ for any $R\succeq 0$ of appropriate dimension. 
\end{remark}

\section{Background on Mixed Monotonicity}
A mixed monotone function is formally defined as follows:
\begin{definition}[ cf. Definition 4 in  \cite{SuffMixMonotone}] \label{d:mm}
$f:\mathcal{X}\to\mathcal{T}$ is {\it mixed monotone on $\mathcal{X}$}  if $\exists g:\mathcal{X}\times \mathcal{X}\to \mathcal{T}$  that satisfies the following:
\begin{enumerate}
\item $f$ is ``embedded'' on the diagonal of $g$, i.e. $g(x,x)=f(x)$ for all $x\in\mathcal{X}$;
\item $g$ is monotonically increasing in its first argument, i.e. $x_1\ge x_2\implies g(x_1,y)\ge g(x_2,y)$ for all $x_1,x_2,y\in\mathcal{X}$; and
\item  $g$ is monotonically decreasing in its second argument, i.e. $y_1\ge y_2\implies g(x,y_2)\ge g(x,y_1)$ for all $x,y_1,y_2\in\mathcal{X}$. 
\end{enumerate}
The function $g$ is referred to as the {\it mixed monotone decomposition function for $f$ on $\mathcal{X}$}.
%$\footnote{Usually it will be said that $g$ is a mixed monotone decomposition function without specifying that it is `over $\mathcal{X}$' as it should be clear from context which `$\mathcal{X}$ it is over'. It is possible that a function will have different decomposition functions over different domains, hence why the `over $\mathcal{X}$' needs to be specified in the formal definition.}. 
\end{definition}

Note that the above definition works in the cases where $x$ is either univariate or  multivariate and $f$ is either scalar or vector. In the multivariate and vector cases, the inequality signs are element-wise inequalities. %The results of this paper pertain to univariate scalar functions. Therefore, for the rest of this paper, $\mathcal{X}\subseteq \mathbb{R}$ and $\mathcal{T}\subseteq \mathbb{R}$.

If a function $f(x)$ is mixed monotone on $\mathcal{X}$ with a decomposition function $g(x,y)$, then 
\begin{align}
g(\underline{x},\overline{x})\le f(x)=g(x,x)\le g(\overline{x},\underline{x})\label{e:boundingproperty}
\end{align}
for all  $\underline{x},x,\overline{x}\in\mathcal{X}$ such that $\underline{x}\le x\le \overline{x}$, which is a useful property that has many applications in computing reachable sets \cite{CooganFinitieAbstraction,9304391} and  designing interval observers \cite{IntervalLip,IntervalEstimationNonlin,Me,LCSS_paper,khaj}. 

The decomposition function for a mixed monotone function on a given set is in general not unique. In the aforementioned applications of mixed monotonicity it is desirable to find a decomposition function that is tight in the sense that the interval $\left[g(\underline{x},\overline{x}),g(\overline{x},\underline{x})\right]$ is the smallest interval that contains the set $\left\{f(x):\underline{x}\le x\le \overline{x}\right\}$. This leads to tighter over-approximations of reachable sets or interval estimates in interval observers. In \cite[Theorem 1]{tight_decomp}, it is shown that a tight decomposition function can be implicitly constructed from an optimization problem. For example, if $f$ is a univariate scalar function with well defined extrema, then, according to \cite[Theorem 1]{tight_decomp}, the following is a tight mixed monotone decomposition function:
\begin{align}
g(x,y) = \text{opt}_\xi^{(x,y)}f(\xi),\label{e:tight}
\end{align}
where 
\begin{align*}
\text{opt}_\xi^{(x,y)}f(\xi) = \left\{\begin{array}{ll} \inf_{\xi\in[x,y]}f(\xi) & x\le y \\  \sup_{\xi\in[y,x]}f(\xi) & x> y \end{array}\right..
\end{align*}
Generally,  \eqref{e:tight} cannot be evaluated efficiently. To use the property \eqref{e:boundingproperty} in applications, one must be able to evaluate the decomposition function $g$ efficiently. Evaluable decomposition functions come usually at the expense of tightness. An evaluable decomposition functions is typically derived by exploiting some property of the function. For example, a popular way to construct decomposition functions for differentiable functions is to use the bounds of the Jacobian such as the following result: 

\begin{proposition}\label{prop1}
Suppose $f:\mathcal{X}\to\mathcal{T}$ is differentiable everywhere in $\mathcal{X}$ where $\mathcal{X}\subseteq \mathbb{R}$ and $\mathcal{T}\subseteq \mathbb{R}$ and $\|f'(x)\|\le L$ for all $x\in\mathcal{X}$. Then $g(x,y)=f(x)+L(x-y)$ is a mixed monotone decomposition function for $f$ on $\mathcal{X}$. 
\end{proposition}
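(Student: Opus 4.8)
The plan is to verify the three defining properties of a mixed monotone decomposition function listed in Definition~\ref{d:mm} directly for the candidate $g(x,y) = f(x) + L(x-y)$.

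First I would dispatch the two easy conditions. For the diagonal embedding property~(1), substituting $y = x$ gives $g(x,x) = f(x) + L(x-x) = f(x)$, so $f$ sits on the diagonal of $g$ as required. For monotonicity in the second argument~(3), I first note that the hypothesis $\|f'(x)\| \le L$ forces $L \ge 0$. Then for any $y_1 \ge y_2$ we have $L(x - y_1) \le L(x - y_2)$, hence $g(x,y_1) \le g(x,y_2)$, which is precisely the decreasing property in the second argument.

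The substantive part of the proof, and the main obstacle, is monotonicity in the first argument~(2). Fixing $y$ and taking $x_1 \ge x_2$, the claim $g(x_1,y) \ge g(x_2,y)$ is equivalent, after cancelling the common $-Ly$ terms, to showing $f(x_1) - f(x_2) + L(x_1 - x_2) \ge 0$. The key step is to convert the pointwise derivative bound into a bound on this finite difference via the Mean Value Theorem: since $f$ is differentiable on $\mathcal{X}$, there exists a point $c$ between $x_2$ and $x_1$ with $f(x_1) - f(x_2) = f'(c)(x_1 - x_2)$. The bound $\|f'(c)\| \le L$ gives $f'(c) \ge -L$, and combined with $x_1 - x_2 \ge 0$ this yields $f(x_1) - f(x_2) \ge -L(x_1 - x_2)$, which rearranges to exactly the inequality needed.

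One technical point I would flag is that applying the Mean Value Theorem requires $f$ to be differentiable on the entire segment joining $x_2$ and $x_1$; this is guaranteed provided $\mathcal{X}$ is an interval (equivalently, convex in $\mathbb{R}$), which is the natural setting for the result. With all three conditions verified, $g$ satisfies every requirement of Definition~\ref{d:mm} and is therefore a valid mixed monotone decomposition function for $f$ on $\mathcal{X}$.
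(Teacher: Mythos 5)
Your proof is correct, but it is genuinely more self-contained than the paper's: the paper dispenses with Proposition~\ref{prop1} in one line by declaring it a special case of a cited result (Theorem~2 of \cite{SuffMixMonotone}, the general Jacobian-bound construction of decomposition functions), whereas you verify the three conditions of Definition~\ref{d:mm} directly. Your argument is the natural scalar instantiation of what that cited theorem does in general: the diagonal embedding and the decrease in $y$ are immediate, and the increase in $x$ reduces via the Mean Value Theorem to $f'(c) \ge -L$, which follows from $\|f'\| \le L$. The one point you flag --- that the Mean Value Theorem needs $f$ differentiable on the whole segment between $x_2$ and $x_1$, hence $\mathcal{X}$ should be an interval --- is a real hypothesis that the paper's statement leaves implicit (it writes only $\mathcal{X} \subseteq \mathbb{R}$), and the same connectedness assumption is needed in the cited general theorem as well; you are right to make it explicit rather than sweep it under the citation. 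What the paper's approach buys is brevity and generality (the cited theorem covers the multivariate, vector-valued case with componentwise Jacobian bounds); what yours buys is a complete, elementary argument that makes the role of each hypothesis visible.
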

\begin{proof}
This is a particular case of \cite[Theorem 2]{SuffMixMonotone}.
\end{proof}

To close this section, some other methods for constructing decomposition functions are presented which will be useful in proving the results of this paper.

\subsection{Constructing decomposition functions from other decomposition functions}
It will be useful throughout this paper to construct decomposition functions by considering a function to be composed from a combination of functions which are mixed monotone and have known decomposition functions. 

\subsubsection{Linear combinations of mixed monotone functions} 

One of the most useful results is the following, which guarantees that linear combinations of mixed monotone functions are mixed monotone. A decomposition function can be constructed from the decomposition functions of the functions that are linearly combined. 

\begin{lemma}\label{l:linearcombo}
Let $f_1, \dots, f_m:\mathcal{X}\to\mathcal{T}$ be mixed monotone functions with decomposition functions $g_1,\dots, g_m:\mathcal{X}\times \mathcal{X}\to\mathcal{T}$. Then, $f(x)=\sum_{i=1}^m a_i f_i(x)$ where $a_1,\dots, a_m\in\mathbb{R}$ is mixed monotone over $\mathcal{X}$ and 
\begin{align}
g(x,y) = \sum_{i=1}^m\max(a_i,0)g_i(x,y)+\min(a_i,0)g_i(y,x)\label{e:linearcombodecomp}
\end{align}
is a decomposition function for it. 
\end{lemma}
\begin{proof}
Without loss of generality, let $a_1,\dots, a_m$ be arranged such that $a_1, \dots a_{l-1}\ge 0$ and $a_l, \dots, a_m<0$ where $1\le l \le m$.  With this arrangement, \eqref{e:linearcombodecomp} is equivalent to the following:
\begin{align}
g(x,y) = \sum_{i=1}^{l-1}a_ig_i(x,y)+\sum_{j=l}^ma_jg_j(y,x). 
\end{align}

Now it is demonstrated that $g(x,y)$  satisfies the three properties of a mixed monotone decomposition function in Definition \ref{d:mm}:
\begin{enumerate}
\item Using substitution, $g(x,x)=\sum_{i=1}^ma_ig_i(x,x) = \sum_{i=1}^ma_if_i(x)=f(x)$. So, $f(x)$ is ``embedded'' along the diagonal of $g$.
\item Now consider $x_1\ge x_2$. Since $g_i$ is a decomposition function $g_i(x_1,y)\ge g_i(x_2,y)$ for all $i=1,\dots m$. Therefore, $a_ig_i(x_1,y)\ge a_ig_i(x_2,y)$ for all $i=1,\dots l-1$. It, thus, follows that 
\begin{align}
\sum_{i=1}^{l-1} a_ig_i(x_1,y)\ge \sum_{i=1}^{l-1} a_ig_i(x_2,y). \label{e:sumposcoeff}
\end{align}
Similarly,  $g_j(y,x_1)\le g_j(y,x_2)$ for all $j=1,\dots m$. So, $a_jg_j(y,x_1)\ge a_jg_j(y,x_2)$ for $j=l,\dots m$. Hence, 
\begin{align}
\sum_{j=l}^{m} a_jg_j(y,x_1)\ge \sum_{j=l}^{m} a_jg_j(y,x_2). \label{e:sumnegcoeff}
\end{align}
Combining \eqref{e:sumposcoeff} and \eqref{e:sumnegcoeff}, yields $g(x_1,y)\ge g(x_2,y)$, which implies that $g$ is increasing in its first argument. 
\item The third properties follows from similar logic to the second property. 
\end{enumerate}
This concludes the proof.
\end{proof}

\subsubsection{Compositions of mixed monotone functions}

The following is a novel result, which is that compositions of mixed monotone functions are mixed monotone, and a decomposition function can be constructed using some compositions and argument swapping. 
\begin{lemma}\label{l:composition}
Let $f_1:\mathcal{X}_1\to\mathcal{T}_1$ and $f_2:\mathcal{X}_2\to \mathcal{T}_2$ where $ \mathcal{T}_1 \subseteq \mathcal{X}_2$ be mixed monotone functions with decomposition functions $g_1$ and $g_2$, respectively. The composition $f_{1\circ 2}=f_1\circ f_2:\mathcal{X}_1\to\mathcal{T}_2$ is mixed monotone over $\mathcal{X}_1$ and
\begin{align}
g_{1\circ 2}(x,y) = g_1(g_2(x,y),g_2(y,x))\label{e:compmmd}
\end{align}
is a decomposition function for it.
\end{lemma}
\begin{proof}
Now it is demonstrated that $g_{1\circ 2}(x,y)$ satisfies the three properties of a mixed monotone decomposition function in Definition \ref{d:mm}:
\begin{enumerate}
    \item By substitution,  $g_{1\circ 2}(x,x) = g_1(g_2(x,x),g_2(x,x)) = g_1(f_2(x),f_2(x))=f_1\circ f_2(x)$ for all $x \in\mathcal{X}_1$.
    \item Consider $x,x_1,x_2,y\in\mathcal{X}_1$ where $x_1\ge x_2$. Since $g_2$ is a decomposition function $g_2(x_1,y)\ge g_2(x_2,y)$. Since $g_1$ is also decomposition function,  
    \begin{align}
        g_1(g_2(x_1,y),g_2(y,x))\ge g_1(g_2(x_2,y),g_2(y,x)).\label{e:intermediate}
    \end{align}
    Similarly, $g_2(y,x_1))\le g_2(y,x_2)$, so 
    \begin{align}
        g_{1\circ 2}(x_1,y)&=g_1(g_2(x_1,y),g_2(y,x_1)) \nonumber\\
        &\ge g_1(g_2(x_1,y),g_2(y,x_2)).\label{e:intermediate2}
    \end{align}
    Substituting $x_2$ for $x$ in \eqref{e:intermediate} yields:
    \begin{align}
        g_1(g_2(x_1,y),g_2(y,x_2))\ge &g_1(g_2(x_2,y),g_2(y,x_2)) \nonumber\\
         &=g_{1\circ 2}(x_2,y).\label{e:intermediate3}
    \end{align}
    Combining \eqref{e:intermediate2} and \eqref{e:intermediate3} yields $g_{1\circ 2}(x_1,y)\ge g_{1\circ 2}(x_2,y)$. So $g_{1\circ 2}$ is increasing in its first argument.
    \item Consider $x,y,y_1,y_2\in\mathcal{X}_1$ where $y_1\ge y_2$, so $g_2(x,y_1)\le g_2(x,y_2)$. Therefore, 
    \begin{align}
     g_1(g_2(x,y_2),g_2(y,x))\ge g_1(g_2(x,y_1),g_2(y,x)).   \label{intermediate4}
    \end{align}
    Furthermore, $g_2(y_2,x)\le g_2(y_1,x)$, so 
    \begin{align}
        g_1(g_2(x,y_1),g_2(y_2,x))\ge & g_1(g_2(x,y_1),g_2(y_1,x))\nonumber\\
        & = g_{1\circ 2}(x,y_1).\label{intermediate5}
    \end{align}
    Substituting $y_2$ for $y$ in \eqref{intermediate4} yields
    \begin{align}
    g_{1\circ 2}(x,y_2) = &g_1(g_2(x,y_2),g_2(y_2,x))\nonumber\\
    &\ge g_1(g_2(x,y_1),g_2(y_2,x)).   \label{intermediate6}
    \end{align}
    Combining \eqref{intermediate5} and \eqref{intermediate6} yields $g_{1\circ 2}(x,y_2)\ge g_{1\circ 2}(x,y_1)$, which means that $g_{1\circ 2}$ is decreasing in its second argument. 
\end{enumerate}
This concludes the proof.
\end{proof}

\section{Mixed Monotonicity of Univariate Polynomials}
The following theorem is the main result of this paper. This theorem guarantees that every univariate scalar polynomial function is mixed monotone on $\mathbb{R}$ and the decomposition function is the difference of two monotonically increasing polynomials. This result exploits the fact that the derivative of a polynomial $p(x)$ can be expressed using the Gram matrix representation:
\begin{align}
p'(x) = {x^{\{\sigma\}}}^\top\left(G+L(\alpha)\right)x^{\{\sigma\}},\label{e:der}
\end{align}
 where $\sigma$ is defined according to the degree of $p'(x)$.

\begin{theorem}\label{t:mmpoly}
Let $p(x):\mathbb{R}\to\mathbb{R}$ be a univariate polynomial and let its derivative be denoted using the Gram matrix representation shown in \eqref{e:der}. There exist polynomials $q(x):\mathbb{R}\to\mathbb{R}$ and $r(y):\mathbb{R}\to\mathbb{R}$ defined by
 \begin{subequations}\label{e:pmono}
\begin{align}
q(x) &= \int {x^{\{\sigma\}}}^\top\mathcal{U}{x^{\{\sigma\}}} dx+p(0),\label{e:pup}\\
r(y)&=\int {y^{\{\sigma\}}}^\top\mathcal{V}{y^{\{\sigma\}}}dy, \label{e:pdown}
\end{align}
\end{subequations}
where  $\mathcal{U}$ and $\mathcal{V}$ are computed from the Gram matrix for $p'(x)$ as follows:
 \begin{subequations}\label{e:gramstuff}
 \begin{align}
 G+L(\alpha) =\mathcal{U}-\mathcal{V},\label{e:decompmatrix}\\
\mathcal{U}, \mathcal{V}\succeq 0,
 \end{align} 
 \end{subequations}
 such that 
\begin{align}
g(x,y)=q(x)-r(y)\label{e:decomp}
\end{align}
 is a mixed monotone decomposition function for $p$ on $\mathbb{R}$. 
\end{theorem}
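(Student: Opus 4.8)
The plan is to verify the three defining properties of a mixed monotone decomposition function in Definition \ref{d:mm} directly, after first securing the existence of the matrices $\mathcal{U}$ and $\mathcal{V}$. \textbf{Existence of the decomposition.} For any admissible parameter $\alpha$, the Gram matrix $G+L(\alpha)$ is symmetric, so Lemma \ref{emma} applies and produces $\mathcal{U},\mathcal{V}\succeq 0$ satisfying $G+L(\alpha)=\mathcal{U}-\mathcal{V}$ as required by \eqref{e:gramstuff}. Hence the integrands in \eqref{e:pmono} are well-defined and $q$ and $r$ are genuine polynomials, so $g$ in \eqref{e:decomp} is well-defined.

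\textbf{Monotonicity in each argument.} Next I would differentiate $q$ and $r$. By the fundamental theorem of calculus, $q'(x)={x^{\{\sigma\}}}^\top\mathcal{U}\,x^{\{\sigma\}}$ and $r'(y)={y^{\{\sigma\}}}^\top\mathcal{V}\,y^{\{\sigma\}}$. Since $\mathcal{U},\mathcal{V}\succeq 0$ and $x^{\{\sigma\}}$ is a real vector for every real $x$, both derivatives are nonnegative everywhere, so $q$ and $r$ are monotonically increasing on $\mathbb{R}$. Property 2 then follows because for $x_1\ge x_2$ we have $g(x_1,y)-g(x_2,y)=q(x_1)-q(x_2)\ge 0$, and property 3 follows because for $y_1\ge y_2$ we have $g(x,y_2)-g(x,y_1)=r(y_1)-r(y_2)\ge 0$.

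\textbf{Diagonal embedding.} Evaluating on the diagonal and merging the two integrals gives $g(x,x)=q(x)-r(x)=\int {x^{\{\sigma\}}}^\top(\mathcal{U}-\mathcal{V})x^{\{\sigma\}}\,dx+p(0)$. Substituting the matrix identity \eqref{e:decompmatrix} and then the Gram representation \eqref{e:der} collapses the integrand to $p'(x)$, so $g(x,x)=\int p'(x)\,dx+p(0)$. I expect this last step to be the point requiring the most care, and the main obstacle: the fundamental theorem of calculus only determines an antiderivative up to an additive constant, so one must fix the antiderivatives in \eqref{e:pup} and \eqref{e:pdown} to have zero constant term, in which case $\int p'(x)\,dx=p(x)-p(0)$. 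The summand $p(0)$ in \eqref{e:pup} is then exactly what cancels the constant lost in integrating $p'$, yielding $g(x,x)=p(x)$ as an identity rather than merely up to a constant. With all three properties verified, $g=q-r$ is a mixed monotone decomposition function for $p$ on $\mathbb{R}$.
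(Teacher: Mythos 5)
Your proposal is correct and follows essentially the same route as the paper: invoke Lemma \ref{emma} to obtain $\mathcal{U},\mathcal{V}\succeq 0$, verify the diagonal embedding by merging the integrals and applying \eqref{e:decompmatrix} and \eqref{e:der}, and establish the two monotonicity properties from the sign of the quadratic-form derivatives. Your extra remark about fixing the integration constants so that $\int p'(x)\,dx=p(x)-p(0)$ is a slightly more careful treatment of a point the paper passes over implicitly, but it does not change the argument.
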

\begin{proof}
It follows from Lemma \ref{emma} that for any $\alpha$ there exist $\mathcal{U}$ and $\mathcal{V}$ such that \eqref{e:gramstuff} holds. Therefore, $\alpha$ can be chosen arbitrarily. Once  $\alpha$, $\mathcal{U}$, and $\mathcal{V}$ are chosen such that \eqref{e:gramstuff} holds, the resulting decomposition function  \eqref{e:decomp} satisfies the three properties in Definition \ref{d:mm}: 
\begin{enumerate}
\item By the definition of the (anti)derivative, $p(x)=\int p'(x)dx+p(0)$. It then follows from \eqref{e:der}-\eqref{e:decompmatrix} and \eqref{e:decomp} that 
\begin{align*}
 p(x)&= \int \left({x^{\{\sigma\}}}^\top\left(G+L(\alpha)\right){x^{\{\sigma\}}}\right)dx+p(0)\\
  &=\int \left({x^{\{\sigma\}}}^\top\mathcal{U}{x^{\{\sigma\}}}-{x^{\{\sigma\}}}^\top\mathcal{V}{x^{\{\sigma\}}}\right)dx +p(0)\\
  &=q(x)-r(x)=g(x,x).
 \end{align*}
 Therefore, $p$ is ``embedded'' along the diagonal of $g$, so the first property holds. 
\item The partial derivative $\frac{\partial g}{\partial x}(x,y) = {x^{\{\sigma\}}}^\top\mathcal{U}{x^{\{\sigma\}}}$. Since $\mathcal{U}\succeq 0$, $\frac{\partial g}{\partial x} (x,y)\ge 0$ for all $(x,y)\in\mathbb{R}\times \mathbb{R}$. Therefore, $g$ is monotonically increasing in its first argument and the second property follows. 
\item The partial derivative $\frac{\partial g}{\partial y}(x,y) = -{y^{\{\sigma\}}}^\top\mathcal{V}{y^{\{\sigma\}}}$. Since $\mathcal{V}\succeq 0$, $\frac{\partial g}{\partial y} (x,y)\le 0$ for all $(x,y)\in\mathbb{R}\times \mathbb{R}$. Therefore, $g$ is monotonically decreasing in its second argument and the third property follows. 
\end{enumerate}
Therefore, $g(x,y)$ defined in \eqref{e:decomp}  is a mixed monotone decomposition function for $p(x)$ on $\mathbb{R}$. Since  $q(x)$ and $r(y)$ are defined in \eqref{e:pup} and \eqref{e:pdown}, respectively, as integrals of polynomials, they are also polynomials. %This concludes the proof.
\end{proof}

Theorem \ref{t:mmpoly}  provides a straightforward procedure to compute a global decomposition function for any univariate polynomial: once the Gram matrix for $p'(x)$  is computed \eqref{e:der} it is decomposed into a difference of two positive semidefinite matrices \eqref{e:gramstuff} to then compute \eqref{e:pmono} which leads to \eqref{e:decomp}.

 Before discussing Theorem \ref{t:mmpoly}  further, the following corollary to Theorem \ref{t:mmpoly}  is presented which shows a sufficient condition to check if a given polynomial is monotonically increasing or decreasing.  The proof is straightforward, so it is omitted. 
 \begin{corollary}
 Let $p(x):\mathbb{R}\to\mathbb{R}$ be a polynomial with a derivative that has a Gram matrix representation as shown in \eqref{e:der}. If there exists $\alpha$ such that $ G+L(\alpha)\succeq 0$ (resp. $\preceq 0$), then $p(x)$ is monotonically increasing (resp. decreasing) on $\mathbb{R}$. 
 \end{corollary}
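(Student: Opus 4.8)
The plan is to reduce the claimed monotonicity to a sign condition on the derivative $p'$, and then to read off that sign condition directly from the definiteness of the Gram matrix via the representation \eqref{e:der}. The whole argument hinges on the single observation that positive semidefiniteness of a symmetric matrix $M$ forces the quadratic form $v^\top M v$ to be nonnegative for \emph{every} vector $v$, and in particular for the monomial vector $v = x^{\{\sigma\}}$ at each fixed $x \in \mathbb{R}$.

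First I would fix an $\alpha$ for which $G + L(\alpha) \succeq 0$, as guaranteed by the hypothesis. By the definition of positive semidefiniteness, ${x^{\{\sigma\}}}^\top (G + L(\alpha)) x^{\{\sigma\}} \ge 0$ for every $x \in \mathbb{R}$, since $x^{\{\sigma\}}$ is simply one admissible choice of argument for the quadratic form. By \eqref{e:der} the left-hand side is exactly $p'(x)$, so $p'(x) \ge 0$ for all $x \in \mathbb{R}$. The standard calculus fact that a differentiable function with everywhere-nonnegative derivative is monotonically increasing then finishes the increasing case. The decreasing case is entirely symmetric: if instead $G + L(\alpha) \preceq 0$, the same substitution gives $p'(x) \le 0$ for all $x$, whence $p$ is monotonically decreasing on $\mathbb{R}$.

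I do not expect any genuine obstacle, but the one point worth stating carefully is why it suffices for the hypothesis to produce a single parameter $\alpha$ rather than to constrain $G$ alone. This is because \eqref{e:der} holds for \emph{every} $\alpha$: by the definition of $\mathcal{L}$ in \eqref{e:Lset}, the term $L(\alpha)$ contributes ${x^{\{\sigma\}}}^\top L(\alpha) x^{\{\sigma\}} = 0$ to the quadratic form, so adding it never changes the value $p'(x)$ while it does change the spectrum of the matrix. Exploiting this freedom is precisely what lets a nominally indefinite $G$ be shifted into the semidefinite cone, and it is the same mechanism that drives the proof of Theorem \ref{t:mmpoly}.
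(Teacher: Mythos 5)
Your proof is correct; the paper itself omits the argument as ``straightforward,'' and the direct route you take --- reading $p'(x) = {x^{\{\sigma\}}}^\top (G+L(\alpha))x^{\{\sigma\}} \ge 0$ (resp. $\le 0$) off the semidefiniteness of the Gram matrix and invoking the standard derivative test for monotonicity --- is surely the intended one, including your correct observation that varying $\alpha$ never changes the value of $p'(x)$ because $L(\alpha)\in\mathcal{L}$. The only cosmetic remark is that your argument does not actually use Theorem \ref{t:mmpoly}; if one wanted the statement to be a literal corollary, one could instead take $\mathcal{U}=G+L(\alpha)$ and $\mathcal{V}=0$ in \eqref{e:gramstuff}, so that $g(x,y)=q(x)$ and $p(x)=g(x,x)=q(x)$ is monotonically increasing, but this buys nothing over your direct computation.
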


 \subsection{Polynomial decomposition function tightness}

For a chosen $\alpha$, the matrix decomposition in \eqref{e:gramstuff} is not unique (see Remark \ref{r:uniqueness}). Moreover, different choices of $\alpha$ in the Gram matrix for $p'(x)$ will lead to different matrix decompositions in \eqref{e:gramstuff}.  Therefore, different choices of  $\alpha$, $\mathcal{U}$, and $\mathcal{V}$ will yield different polynomial decomposition functions for $p(x)$.

It is desirable to choose $\alpha$, $\mathcal{U}$, and $\mathcal{V}$ to yield a decomposition function that is as tight as possible. That is, find $\alpha$, $\mathcal{U}$, and $\mathcal{V}$ such that $\left\|g(\overline{x},\underline{x})- g(\underline{x},\overline{x})\right\|$ is small. One way to choose $\alpha$, $\mathcal{U}$, and $\mathcal{V}$ is to solve a semidefinite programming problem with the following form:

   \begin{equation}
\begin{aligned}
& \underset{\alpha, \mathcal{U}, \mathcal{V}}{\text{minimize}} 
& & J(\alpha, \mathcal{U}, \mathcal{V})\\
& \text{subject to}
& &  \eqref{e:gramstuff}
\end{aligned}
\label{e:sdp}
\end{equation}

One of the main questions is then to specify which objective function $J(\alpha, \mathcal{U}, \mathcal{V})$ should be minimized to construct the tightest decomposition function possible. A reasonable objective function the following:
\begin{align}
J(\alpha, \mathcal{U}, \mathcal{V})=\|\mathcal{U}\|_F+\|\mathcal{V}\|_F.\label{e:frobob}
\end{align}
The reasoning for this is as follows: by the triangular inequality,  
\begin{align*}
\left\|g(\overline{x},\underline{x})- g(\underline{x},\overline{x})\right\|\le \left\|q(\overline{x})-q(\underline{x})\right\| +\left\|r(\overline{x})-r(\underline{x})\right\|.
\end{align*}
By the mean value theorem \cite[Theorem 5.10]{rudin}, 
\begin{align*}
q(\overline{x})-q(\underline{x})=q'(\xi_1)(\overline{x}-\underline{x})={\xi_1^{\{\sigma\}}}^\top\mathcal{U}{\xi_1^{\{\sigma\}}}(\overline{x}-\underline{x}),
\end{align*}
and
\begin{align*}
r(\overline{x})-r(\underline{x})=r'(\xi_2)(\overline{x}-\underline{x})={\xi_2^{\{\sigma\}}}^\top\mathcal{V}{\xi_2^{\{\sigma\}}}(\overline{x}-\underline{x}).
\end{align*}
 for some $\xi_1$ and $\xi_2$ such that  $\underline{x}\le\xi_1,\xi_2\le\overline{x}$. It then follows that 
 \begin{align*}
  \left\|g(\overline{x},\underline{x})- g(\underline{x},\overline{x})\right\|\le \|\mathcal{U}\|_F\left\|{\xi_1^{\{\sigma\}}}\right\|^2\|\overline{x}-\underline{x}\|\\
  +\|\mathcal{V}\|_F\left\|{\xi_2^{\{\sigma\}}}\right\|^2\|\overline{x}-\underline{x}\|.
 \end{align*}
 This suggests that making $\|\mathcal{U}\|_F$ and $\|\mathcal{V}\|_F$ smaller tends to lead to tighter decomposition functions. %Hence,  \eqref{e:frobob} is a reasonable objective function to minimize. 
 
 Later on, in \S\ref{s:objective}, it  will be  shown via example that different decomposition functions derived from different objective functions can have a noticeable effect on tightness of the decomposition function. From the authors experience, using the objective function \eqref{e:frobob} tends to yield tighter decomposition functions. It could be the case that other objective functions yield tighter decomposition functions for different polynomials. Therefore, in practice, one must experiment with different objective functions and check tightness in the regimes of interest. 

\section{Mixed Monotonicity of Multivariate Polynomials}

Armed with Theorem \ref{t:mmpoly} and Lemmas \ref{l:linearcombo} and \ref{l:composition}, it can now be proven that products of mixed monotone functions are mixed monotone. This will be useful in proving that all multivariate polynomial functions are mixed monotone and have polynomial decomposition functions.
\begin{lemma}\label{l:product}
Let $f_1:\mathcal{X}\to \mathcal{T}$ and $f_2:\mathcal{X}\to\mathcal{T}$ where $\mathcal{T}\subseteq \mathbb{R}$ be mixed monotone functions with decomposition functions $g_1$ and $g_2$, respectively. The product $f_{1\times 2} (x)= f_1(x)f_2(x)$ is a mixed monotone function over $\mathcal{X}$ and 
\begin{align}
    &g_{1\times 2}(x,y) = \frac{1}{2} g_{sq}(g_1(x,y)+g_2(x,y),g_1(y,x)+g_2(y,x))\nonumber\\
    &-\frac{1}{2}g_{sq}(-g_1(x,y)+g_2(y,x),-g_1(y,x)+g_2(x,y))\label{e:mmproduct}
\end{align}
is a decomposition function for it, where $f_{sq}:\mathbb{R}\to\mathbb{R}$ is the squaring function $f_{sq}(z) = z^2$ and $g_{sq}$ is any decomposition function for it.
\end{lemma}
\begin{proof}
The first step of the proof is to express $f_{1\times 2}(x)$ in a quadratic form as follows:
\begin{align*}
f_{1\times 2}(x) = \begin{bmatrix}f_1(x)\\ f_2(x) \end{bmatrix}^\top \begin{bmatrix} 0 & \frac{1}{2}\\ \frac{1}{2} & 0\end{bmatrix}\begin{bmatrix}f_1(x)\\ f_2(x) \end{bmatrix}.
\end{align*}
Using an eigendecomposition, this can be further expanded to the following:
\begin{align}
&f_{1\times 2}(x)= \frac{1}{2}\left((f_1(x)+f_2(x))^2-(-f_1(x)+f_2(x))^2\right)\nonumber\\
&=\frac{1}{2} \left( f_{sq}\circ(f_1+f_2)(x)-f_{sq}\circ(-f_1+f_2)(x) \right).  \label{e:expanded}
\end{align}

By Theorem \ref{t:mmpoly}, $f_{sq}$ is mixed monotone on $\mathbb{R}$. Hence, in \eqref{e:expanded}, $f_{1\times 2}$ is expressed as a linear combination of compositions of mixed monotone functions. By  Lemmas \ref{l:linearcombo} and \ref{l:composition}, this means that $f_{1\times 2}$ is mixed monotone. Since $f_{sq}$ is mixed monotone, a decomposition function $g_{sq}$ exists. The decomposition \eqref{e:mmproduct} can then be constructed using Lemmas \ref{l:linearcombo}-\ref{l:composition}.
\end{proof}

\begin{remark}\label{r:products}
Lemma \ref{l:product} can be used to show that products of more than two functions are mixed monotone by induction. Suppose $f_1, \dots f_m$ are mixed monotone functions. The function $f(x)=\prod _{i=1}^mf_i(x)$ is mixed monotone and this follows because, by Lemma \ref{l:product}, $f_{1\times 2} (x)= f_1(x)f_2(x)$ is mixed monotone and has decomposition function $g_{1\times 2}$ shown in \eqref{e:mmproduct}. Then, using Lemma \ref{l:product} again,  $f_{1\times 2\times 3}(x)= f_1(x)f_2(x)f_3(x)$ is mixed monotone and has the following decomposition function: $g_{1\times 2\times 3}(x,y) = \frac{1}{2} g_{sq}(g_{1\times 2}(x,y)+g_3(x,y),g_{1\times 2}(y,x)+g_3(y,x))-\frac{1}{2}g_{sq}(-g_{1\times 2}(x,y)+g_3(y,x),-g_{1\times 2}(y,x)+g_3(x,y))$. This process is then repeated $m-3$ more times to construct a decomposition function for $f$.
\end{remark}

The following theorem shows that all multivariate monomials are mixed monotone and have polynomial decomposition functions.The idea behind the proof is to use Theorem \ref{t:mmpoly} and Lemma \ref{l:product} to prove that monomials of multivariate polynomials are mixed monotone. It then follows from Lemma \ref{l:linearcombo} that every polynomial is mixed monotone.

\begin{theorem}\label{t:mmmultivariate}
Every polynomial $p:\mathbb{R}^n\to \mathbb{R}$ is mixed monotone on $\mathbb{R}^n$ and has a polynomial decomposition function. 
\end{theorem}
\begin{proof}
To prove this result, it is sufficient to prove that $x^\alpha$ is a mixed monotone function for any $n$-tuple $\alpha = (\alpha,\dots, \alpha_n)$, $\alpha_i\in\mathbb{N}_0$,  and has a polynomial decomposition function. Then the result follows using Lemma \ref{l:linearcombo}.

Let $p_d :\mathbb{R}\to\mathbb{R}$ be $p_d(z)=z^d$ where $d\in\mathbb{N}_0$. By Theorem \ref{t:mmpoly}, $p_d(z)$ is mixed monotone over $\mathbb{R}$ and has a polynomial decomposition function. Therefore, using Lemma \ref{l:composition}, $p_d(e_i^\top x) = p_d(x_i)$ is mixed monotone over $\mathbb{R}^n$ and has a polynomial decomposition function $g_i(x,y)$.

By Lemma \ref{l:product}, $f_{i\times j}(x) = p_{\alpha_i}(e_i^\top x)p_{\alpha_j}(e_j^\top x) = x_i^{\alpha_i}x_j^{\alpha_j}$ is mixed monotone over $\mathbb{R}^n$, where $i,j\in \{1,\dots, n\}$ and $g_{i\times j}(x,y) = \frac{1}{2} g_{sq}(g_i(x,y)+g_j(x,y),g_i(y,x)+g_j(y,x))-\frac{1}{2}g_{sq}(-g_i(x,y)+g_j(y,x),-g_i(y,x)+g_j(x,y))$ is a decomposition function for it. If $g_{sq}$ is chosen to be a polynomial decomposition function for $f_{sq}$, which is always possible due to Theorem \ref{t:mmpoly}, and $g_i, g_j$ are also chosen to be polynomials, then $g_{i\times j}(x,y)$ is a polynomial in $(x,y)$ since it is comprised of compositions and linear combinations of polynomials. 

By induction (cf. Remark \ref{r:products}), it follows that $x^\alpha = \prod_{i=1}^nf_{\alpha_i}(e_i^\top x)$ is a mixed monotone function on $\mathbb{R}^n$ and has a polynomial decomposition function. 
\end{proof}
 \section{Examples}
 In this section five examples are provided. In the examples,  \verb|Convex.jl|\footnote{https://jump.dev/Convex.jl/stable/} with the \verb|SCS| solver  was used to solve the semidefinite program \eqref{e:sdp}. To verify nonnegativity of polynomials using sum of squares techniques, the package \verb|SumOfSquares.jl|\footnote{https://jump.dev/SumOfSquares.jl/stable/} was used. 
 \subsection{Example 1: Comparison to decomposition function based on Jacobian bounds }
Consider the following polynomial as an example:
\begin{align}
p_1(x) = x^2+1. \label{e:example}
\end{align}
Its derivative has the following Gram matrix representation:
\begin{align*}
p_1'(x) = \begin{bmatrix} 1 \\ x \end{bmatrix}^\top \begin{bmatrix}0 & 1\\ 1 & 0\end{bmatrix} \begin{bmatrix} 1 \\ x \end{bmatrix}.
\end{align*}
 Note that for $x^{\{1\}}$, $\mathcal{L}$ defined in \eqref{e:Lset} is a singleton with the zero matrix so there is no $\alpha$ to solve for in this example. The semidefinite program \eqref{e:sdp} with the objective function \eqref{e:frobob} was solved to find 
\begin{align*}
\mathcal{U} = \frac{1}{2}\begin{bmatrix}1 & 1\\ 1& 1\end{bmatrix},\; \mathcal{V} = \frac{1}{2}\begin{bmatrix}1 & -1\\ -1& 1\end{bmatrix}.
\end{align*}
From \eqref{e:pmono}-\eqref{e:decomp}, the following decomposition function is constructed:
\begin{align}
g_1(x,y) = \frac{x^3}{6}+\frac{x^2}{2}+\frac{x}{2}-\frac{y^3}{6}+\frac{y^2}{2}-\frac{y}{2}+1.\label{decompex}
\end{align}

This decomposition satisfies the three properties shown in Definition \ref{d:mm}:
\begin{enumerate}
\item The first property can be verified by substitution to find $g_1(x,x)=p_1(x)$. 
\item The partial derivative $\frac{\partial g_1}{\partial x}(x,y) = \frac{x^2}{2}+x+\frac{1}{2}= (\frac{\sqrt{2}}{2}x+\frac{\sqrt{2}}{2})^2\ge 0$ for all $(x,y)\in\mathbb{R}\times\mathbb{R}$, so $g_1$ is monotonically increasing in its first argument. 
\item The partial derivative $\frac{\partial g_1}{\partial y}(x,y) = -(\frac{\sqrt{2}}{2}-\frac{\sqrt{2}}{2}y)^2\le 0$ for all $(x,y)\in\mathbb{R}\times\mathbb{R}$, so $g_1$ is monotonically decreasing in its second argument. 
\end{enumerate}

To illustrate the decomposition function, consider Fig. \ref{f:mm} where for $z=-5$ to $z=5$, $g_1(z+1,z-0.5)$ is plotted in red and $g_1(z-0.5,z+1)$ is plotted in blue. Additionally, $p_1(z+1), p_1(z+0.5), p_1(z),p_1(z-0.25),$ and $p_1(z-0.5)$ are plotted in gray. Since $g_1$ is a decomposition function, then $g_1(z-0.5,z+1)\le p_1(z+1), p_1(z+0.5), p_1(z),p_1(z-0.25),p_1(z-0.5)\le g_1(z+1,z-0.5)$ for all $z\in\mathbb{R}$. This can clearly be seen in Fig. \ref{f:mm} by the fact that all of the gray lines lie in between the red and blue lines. It appears that the decomposition function is less tight as $\|z\|$ increases.

\begin{figure}[!t]
\centerline{\includegraphics[width=\columnwidth]{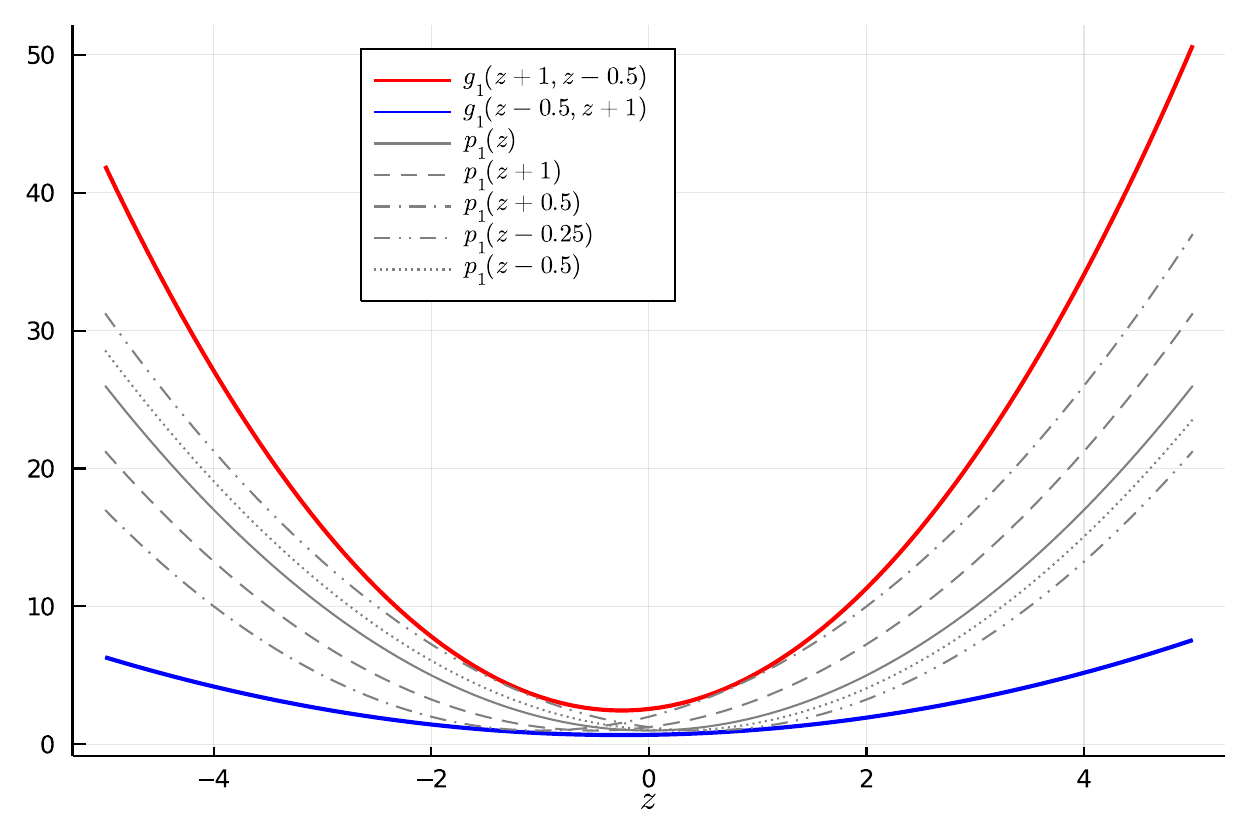}}
\caption{Depiction of the decomposition function \eqref{decompex} for \eqref{e:example}.}
\label{f:mm}
\end{figure}

The following is a mixed monotone decomposition function for $p_1(x)$ on  $\mathcal{X}=\{x:\|x\|\le 2\}$:
\begin{align}
g_2(x,y) = p_1(x)+4(x-y),\label{e:linmmp}
\end{align}
which was constructed from Proposition \ref{prop1} using Jacobian bounds on $\mathcal{X}$. 

\begin{figure}[!t]
\centerline{\includegraphics[width=\columnwidth]{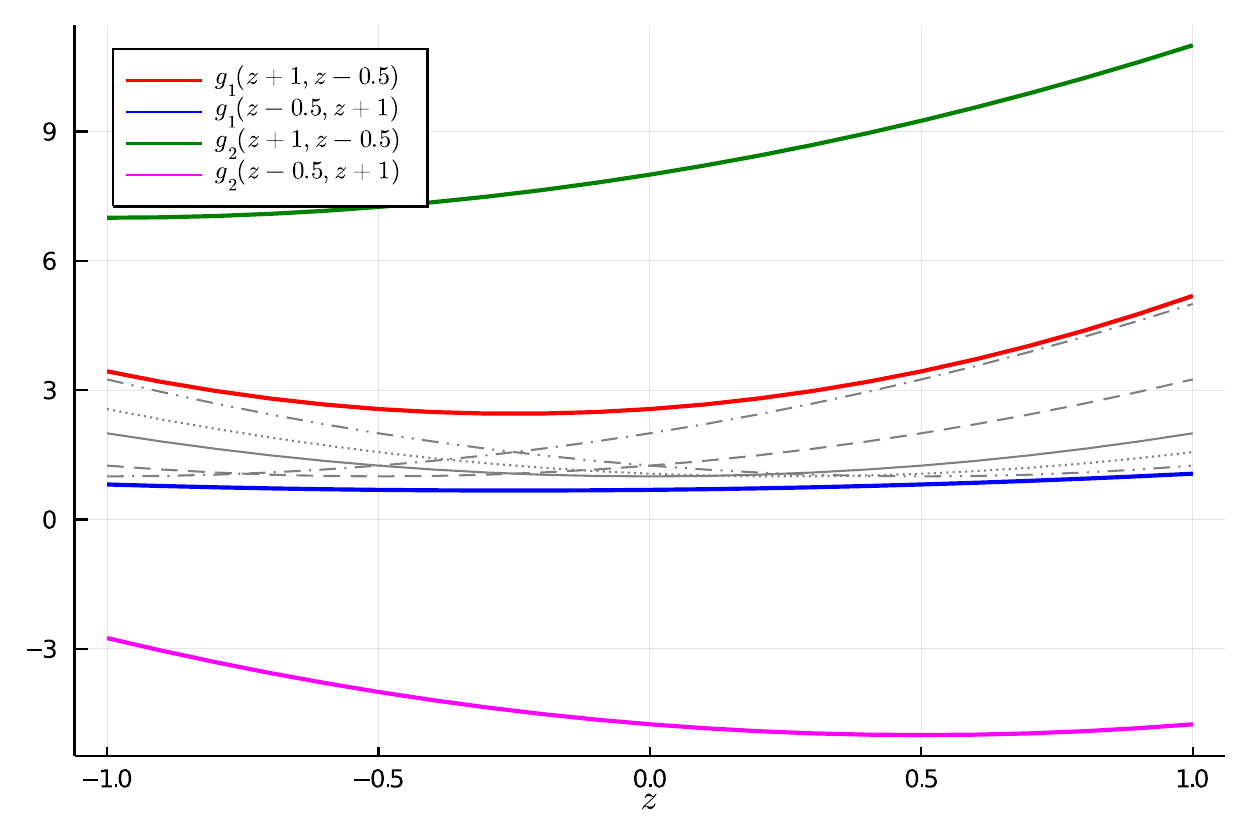}}
\caption{Comparison of the polynomial decomposition function given in \eqref{decompex} and the decomposition function derived from Jacobian bounds on $\mathcal{X}=\{x:\|x\|\le 2\}$ given in \eqref{e:linmmp}. The gray lines are the same as shown in Fig. \ref{f:mm}.}
\label{f:mm2}
\end{figure}

In Fig. \ref{f:mm2}, for $z=-1$ to $z=1$, $g_2(z+1,z-0.5)$ is plotted in green and $g_2(z-0.5,z+1)$ is plotted in magenta along with $g_1(z+1,z-0.5)$ in red and $g_1(z-0.5,z+1)$ in blue. The gray lines are the same as shown in Fig. \ref{f:mm}.  In addition to being a global decomposition function, it is clear from Fig. \ref{f:mm2} that the polynomial decomposition function is much tighter than the decomposition function derived from Jacobian bounds. %This has clear implications for using mixed monotonicity for over-approximating reachable sets. 

 \subsection{Example 2: Tightness of polynomial decomposition functions derived from different objective functions}\label{s:objective}
 Consider the fourth Legendre polynomial as an example:
 \begin{align*}
 p_2(x) = \frac{35}{8}x^4-\frac{15}{4}x^2+\frac{3}{8}.
 \end{align*}
Its derivative has the following Gram matrix representation:
\begin{align*}
p_2'(x) = \begin{bmatrix} 1 \\ x \\ x^2 \end{bmatrix}^\top \begin{bmatrix}0 & -\frac{15}{4}& \alpha \\ -\frac{15}{4} & -2\alpha & \frac{35}{4}&\\ \alpha & \frac{35}{4} & 0 \end{bmatrix} \begin{bmatrix} 1 \\ x \\ x^2\end{bmatrix}.
\end{align*}

The semidefinite program \eqref{e:sdp} with the objective function \eqref{e:frobob} was solved which leads to the following decomposition function using \eqref{e:pmono}-\eqref{e:decomp}:
\begin{align}
g_3(x,y) &= 0.375 + 0.9206 x - 1.875x^{2} + 0.4897x^{3} \nonumber\\
&+ 2.1875 x^{4} + 0.8109x^{5}- 0.9206 y - 1.875y^{2} \nonumber\\
&- 0.4897 y^{3} + 2.1875y^{4} - 0.8109 y^{5}.\label{e:frob}
\end{align}

Solving \eqref{e:sdp} with the following objective function:
\begin{equation}
J(\alpha, \mathcal{U}, \mathcal{V})=\|\mathcal{U}\|_1+\|\mathcal{V}\|_1\label{e:1normob}
\end{equation}
 leads to the following decomposition function using \eqref{e:pmono}-\eqref{e:decomp}:
\begin{align}
g_4(x,y) &= 0.375 + 8.1767x - 1.875 x^{2} + 1.2672x^{3} \nonumber\\
&+ 2.1875 x^{4} + 1.1353x^{5} - 8.1767y - 1.875y^{2} \nonumber\\
&- 1.2672 y^{3} + 2.1875 y^{4} - 1.1353 y^{5}.\label{e:l1}
\end{align}
\begin{figure}[!t]
\centerline{\includegraphics[width=\columnwidth]{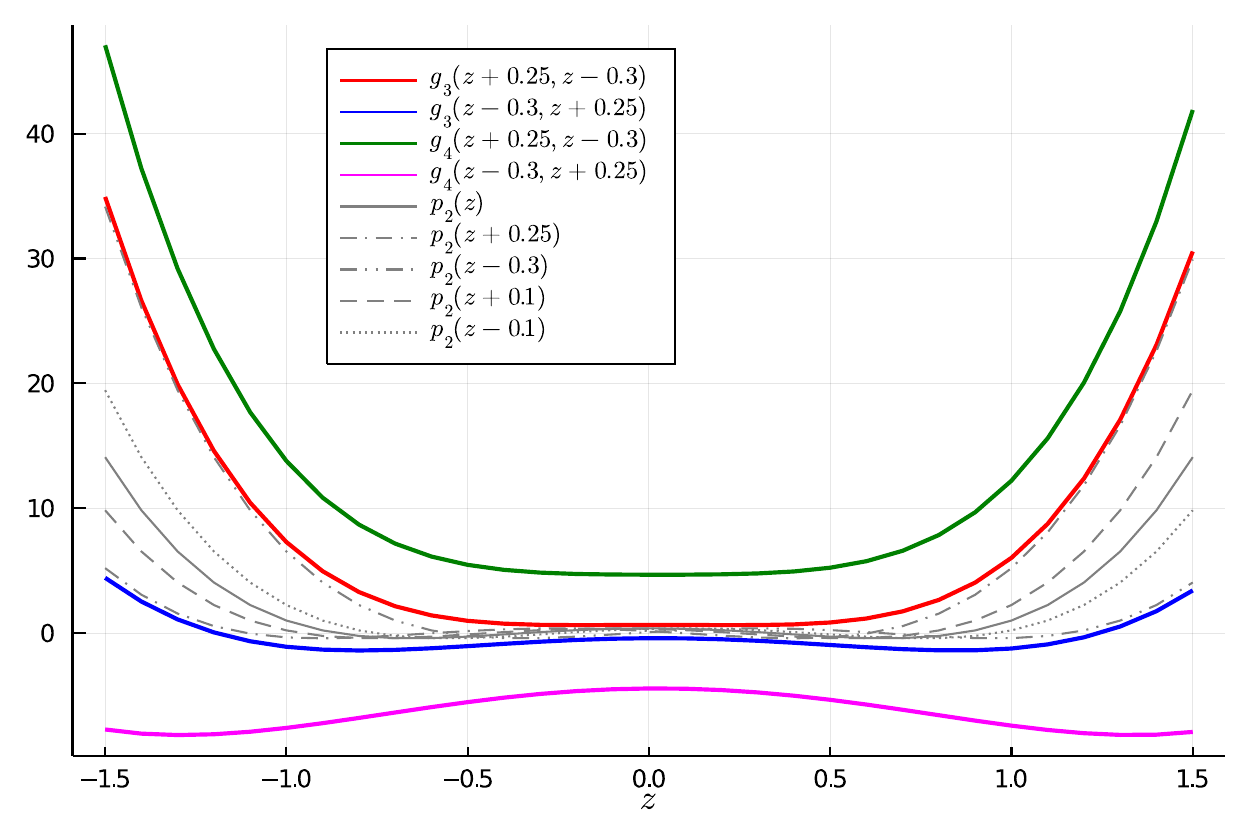}}
\caption{Comparison of the decomposition function \eqref{e:frob} derived by minimizing  \eqref{e:frobob} and the decomposition function \eqref{e:l1} derived by minimizing \eqref{e:1normob}.}
\label{f:mml}
\end{figure}
Note that solving \eqref{e:sdp} yielded $\alpha=0$ for both objective functions. 

Fig. \ref{f:mml} shows a comparison of the decomposition function $g_3$ given in \eqref{e:frob} and $g_4$ given in \eqref{e:l1} by plotting $g_3(z+0.25,z-0.3)$, $g_3(z-0.3,z+0.25)$, $g_4(z+0.25,z-0.3)$ and $g_4(z-0.3,z+0.25)$ for $z=-1.5$ to $z=1.5$ in red, blue, green, and magenta, respectively. It can be clearly seen that the interval $[g_3(z-0.3,z+0.25),g_3(z+0.25,z-0.3)]$ is tighter than the interval $[g_4(z-0.3,z+0.25),g_4(z+0.25,z-0.3)]$ for all $z=-1.5$ to $z=1.5$.

\subsection{Example 3: Checking monotonicity of a polynomial}
Consider the following polynomial:
\begin{align}
p_3(x) = \frac{x^5}{5}-2x^2+3x.\label{monotone}
\end{align}
Upon inspecting the plot of $p_3$ in Fig. \ref{f:monotone}, it appears that $p_3$ is monotonically increasing. Here, Corollary 1 is used to verify that it is indeed monotonically increasing.

\begin{figure}[!t]
\centerline{\includegraphics[width=\columnwidth]{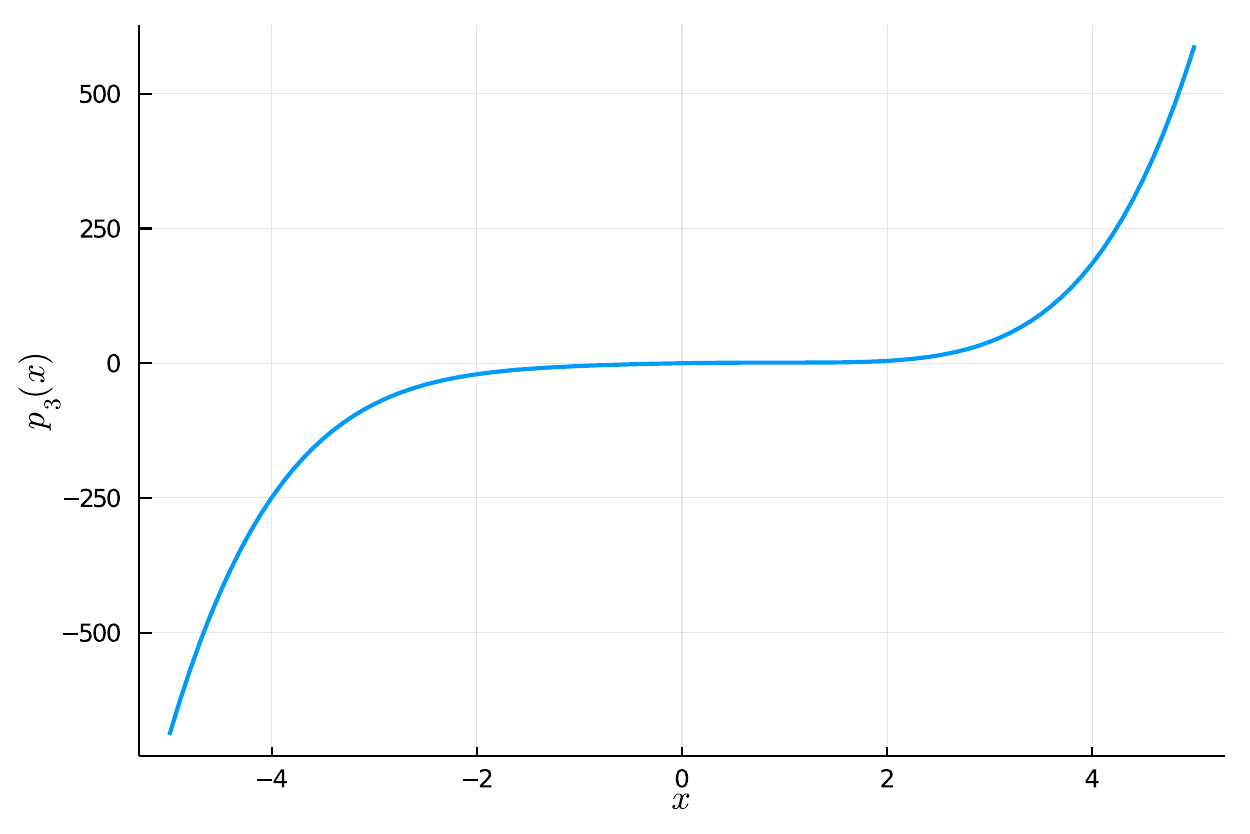}}
\caption{Plot of the polynomial $p_3$ given in \eqref{monotone}.}
\label{f:monotone}
\end{figure}

The derivative of $p_3$ has the following Gram matrix representation:
\begin{align*}
p_3'(x) = &  {x^{\{2\}}}^\top\left(G+L(\alpha)\right)x^{\{2\}}\\
= &\begin{bmatrix} 1 \\ x \\ x^2 \end{bmatrix}^\top \begin{bmatrix} 3 & -2  & -\alpha \\ -2 & 2\alpha & 0\\ -\alpha & 0 & 1\end{bmatrix} \begin{bmatrix} 1 \\ x \\ x^2\end{bmatrix}.
\end{align*}
The matrix $G+L(1) \succeq 0$. Hence, by Corollary 1, $p_3$ given in \eqref{monotone} is monotonically increasing on $\mathbb{R}$.

\subsection{Example 4: Multivariate polynomial function}
Consider the following multivariate monomial:
\begin{align}
p_m(x) = x_1x_2.
\end{align}
According to Theorem \ref{t:mmmultivariate}, $p_m$ is mixed monotone globally over $\mathbb{R}^2$ and has a polynomial decomposition function. The monomial $p_m$ is the product of two mixed monotone functions ($p_m(x)=f_1(x)f_2(x)$ where $f_1(x)=e_1^\top x=x_1$ and $f_2(x)=e_2^\top x=x_2$). Therefore, using Lemma \ref{l:product}, $p_m$ has the following mixed monotone decomposition function:
\begin{align}
&g_m(x,y) = -0.3536y_2 - 0.3536y_1 + 0.3536x_2 + 0.3536x_1\nonumber\\
&+ 0.25y_1y_2 + 0.25x_2y_1 + 0.25x_1y_2 + 0.25x_1x_2\nonumber \\
&- 0.05893y_2^3 - 0.08839y_1y_2^2 - 0.08839y_1^2y_2 - 0.05893y_1^3\nonumber\\
& + 0.08839x_2y_1^2 - 0.08839x_2^2y_1 + 0.05893x_2^3  \nonumber\\
&+0.08839x_1y_2^2 + 0.08839x_1x_2^2 - 0.08839x_1^2y_2\nonumber \\
&+ 0.08839x_1^2x_2 + 0.058934x_1^3.\label{e:multivar}
\end{align}
To compute this decomposition function, $g_{sq}(x,y)$ was chosen similarly to Example 1 (i.e. $g_{sq}(x,y)=g_1(x,y)-1$ where $g_1$ is given in  \eqref{decompex}). Now it is shown that $g_m$ satisfies all three properties of a decomposition function for $p_m$.

\begin{enumerate}
\item The first property can be verified by substitution to find $g_m(x,x)=p_m(x)$. 
\item Now it will be shown that $g_m$ is increasing in its first argument by showing that the partial derivative with respect to $x_1$ and $x_2$ can be written as sums of squares:
\begin{enumerate}
\item The partial derivative $\frac{\partial g_m}{\partial x_1}(x,y) = 0.3536 + 0.25y_2 + 0.25x_2 + 0.08839y_2^2 + 0.08839x_2^2 - 0.1768x_1y_2 + 0.1768x_1x_2 + 0.1768x_1^2 = 
(-0.5946- 0.2102y_2 - 0.2102x_2)^2 + (-0.2102y_2 - 0.2102x_2 - 0.4204x_1)^2\ge 0$ for all $(x,y)\in\mathbb{R}^2\times \mathbb{R}^2$.
\item The partial derivative $\frac{\partial g_m}{\partial x_2}(x,y) = (-0.5946- 0.2102y_1 - 0.2102x_1)^2 + ( 0.2102y_1 - 0.4204x_2 - 0.2102x_1)^2\ge 0$ for all $(x,y)\in\mathbb{R}^2\times \mathbb{R}^2$.
\end{enumerate}
\item Finally, $g_m$ is shown to be decreasing in its second argument by showing that the partial derivatives with respect to $y_1$ and $y_2$ can be expressed as the negations of sums of squares:
\begin{enumerate}
\item The partial derivative $\frac{\partial g_m}{\partial y_1}(x,y) =-(-0.5946+ 0.2102y_2 + 0.2102x_2)^2 -(0.2102y_2 + 0.4204y_1 - 0.2102x_1)^2 _2\le 0$ for all $(x,y)\in\mathbb{R}^2\times \mathbb{R}^2$.
\item The partial derivative $\frac{\partial g_m}{\partial y_2}(x,y) = -(-0.5946 + 0.2102y_1 + 0.2102x_1)^2 - (0.4204y_2 - 0.2102y_2 + 0.2102_1x_1)^2\le 0 $  for all $(x,y)\in\mathbb{R}^2\times \mathbb{R}^2$.
\end{enumerate}
\end{enumerate}

 \subsection{Example 5: Reachable set computation}
 Consider the following discrete-time system inspired by the example in \cite[\S VI.B]{10068731}:
 \begin{subequations}\label{e:dt}
 \begin{align}
 x[k+1]=f(x[k])+u[k],\; \forall k=0,1,2,\dots,
 \end{align}
 where 
 \begin{align}
 f(x) = 0.7x+0.32x^2.\label{e:f}
 \end{align}
  \end{subequations}
 Suppose that the input $u[k]$ is bounded as follows:
 \begin{align}
 -0.1\le u[k]\le 0.1,\; \forall k=0,1,2,\dots.\label{e:inputbounds}
 \end{align}
 
  Moreover, suppose that the initial condition belongs to some known interval:
 \begin{align}
 \underline{\xi}_0\le x[0]\le \overline{\xi}_0.\label{e:initbox}
 \end{align}
 
 The exact reachable set after $N$ steps is defined as follows (cf. \cite[Definition 4]{10068731}):
 \begin{align}
 \mathcal{R}_N = \{&x[N]\in \mathbb{R}:  x[k+1]=f(x[k])+u[k],\nonumber\\
 &x[0] \text{ satisfies \eqref{e:initbox}}, u[k] \text{ satisfies \eqref{e:inputbounds}},  \nonumber\\
 & \forall k=0,1,2,\dots,N-1\}.\nonumber
 \end{align}
The exact reachable set $\mathcal{R}_N$ is a difficult set to compute; however, by finding a decomposition function $g$ for $f$ and exploiting the property of decomposition functions \eqref{e:boundingproperty}, an over-approximation of $\mathcal{R}_N$ can be efficiently computed. By propagating the following system:
\begin{subequations}\label{e:overapp}
 \begin{align}
\overline{x}[k+1] &= g(\overline{x}[k],\underline{x}[k])+0.1, \; \overline{x}[0] =  \overline{\xi}_0,\\
\underline{x}[k+1] &= g(\underline{x}[k],\overline{x}[k])-0.1, \; \underline{x}[0] =  \underline{\xi}_0,
 \end{align}
 an over-approximation of the reachable set with a compact interval follows, i.e.,
 \begin{align*}
 \mathcal{R}_N\subseteq \left[\underline{x}[N],\overline{x}[N]\right],\; \forall N=1,2,3,\dots.
 \end{align*}
 The over-approximation is efficiently computed because, once a decomposition function is found, the reachable set at step $N$ is over-approximated by simply evaluating polynomials $2N$ times.

 Solving \eqref{e:sdp} with the objective function \eqref{e:frobob} and using \eqref{e:pmono}-\eqref{e:decomp}, the following decomposition function was found for the polynomial function $f(x)$ in \eqref{e:f}:
 \begin{align}
 g(x,y) &= 0.7163 x + 0.2781 x^{2} + 0.03599 x^{3}\nonumber\\
 &- 0.0163y + 0.0419 y^{2} - 0.03599 y^{3}.
 \end{align}
 \end{subequations}

In Fig. \ref{f:reach}, the over-approximation of the reachable set computed by propagating \eqref{e:overapp} for the system \eqref{e:dt} is shown where $\underline{\xi}_0= x[0]=\overline{\xi}_0=0$ in \eqref{e:initbox}, i.e. reachability from the origin. $\overline{x}[k]$ is shown in red and $\underline{x}[k]$ is shown in blue. Sample trajectories of \eqref{e:dt} with randomly generated inputs that satisfy the bounds \eqref{e:inputbounds} are shown in gray. All of the gray lines are contained within the red and blue lines, i.e. for each trajectory, $x[k]$ belongs the interval $\left[\underline{x}[k],\overline{x}[k]\right]$ for all $k=0,1,2,\dots$. Moreover, it is clear from Fig. \ref{f:reach} that the over-approximation is a tight over-approximation in the sense that the interval $\left[\underline{x}[k],\overline{x}[k]\right]$ is almost the smallest interval that contains all of the gray trajectories.

 \begin{figure}[!t]
\centerline{\includegraphics[width=\columnwidth]{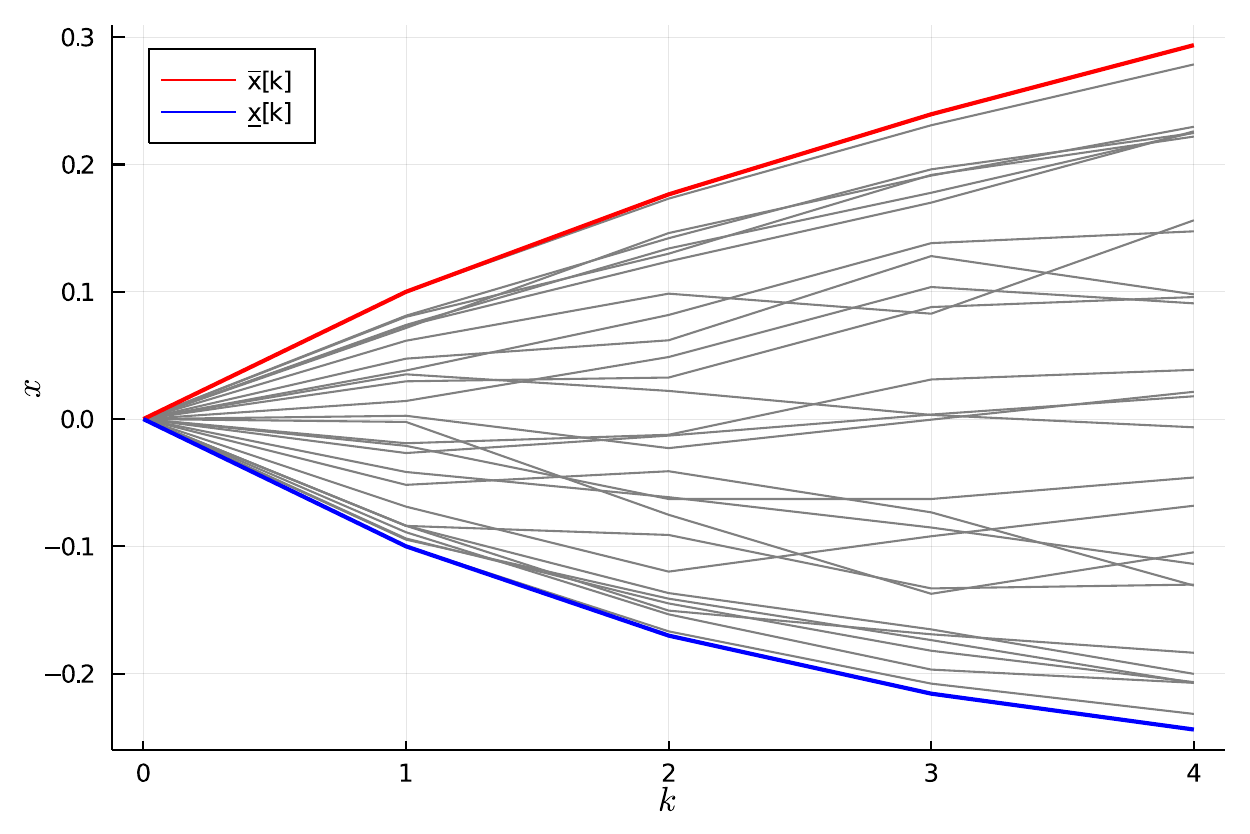}}
\caption{Depiction of the over-approximation of the reachable set from the origin computed by propagating \eqref{e:overapp}. The gray lines are sample trajectories of \eqref{e:dt} with randomly generated inputs that satisfy the bounds \eqref{e:inputbounds}.}
\label{f:reach}
\end{figure}

 \section{Conclusions}
 In this paper, it was shown that every univariate polynomial function is mixed monotone with a decomposition function that is the difference of two monotonically increasing polynomials. Examples of polynomial mixed monotone functions were given. It was shown that not only are the polynomial decomposition functions global decomposition functions, but they can outperform (in terms of tightness) local decomposition functions derived from local Jacobian bounds. Moreover, an example of reachable set computation showed that a tight over-approximation of a discrete-time polynomial system can be efficiently computed using its polynomial decomposition function. 
 
 There are a few avenues to pursue future work. Firstly, an important application would be to consider computing decomposition functions for polynomials with uncertain coefficients. This would be invaluable for constructing interval observers for uncertain polynomial systems. Secondly, further investigation into how to compute the tightest decomposition function should be pursued. More work should be dedicated to constructing decomposition functions for multivariate polynomials.

%\begin{ack}                               % Place acknowledgements
%Partially supported by the Roman Senate.  % here.
%\end{ack}

\bibliographystyle{plain}        % Include this if you use bibtex 
\bibliography{DistBib}           % and a bib file to produce the 
                                 % bibliography (preferred). The
                                 % correct style is generated by
                                 % Elsevier at the time of printing.

%\begin{thebibliography}{99}     % Otherwise use the  
                                 % thebibliography environment.
                                 % Insert the full references here.
                                 % See a recent issue of Automatica 
                                 % for the style.
%  \bibitem[Heritage, 1992]{Heritage:92}
%     (1992) {\it The American Heritage. 
%     Dictionary of the American Language.}
%     Houghton Mifflin Company.
%  \bibitem[Able, 1956]{Abl:56}
%     B.~C.~Able (1956). Nucleic acid content of macroscope. 
%     {\it Nature 2}, 7--9. 
%  \bibitem[Able {\em et al.}, 1954]{AbTaRu:54}   
%     B.~C. Able, R.~A. Tagg, and M.~Rush (1954).
%     Enzyme-catalyzed cellular transanimations.
%     In A.~F.~Round, editor, 
%     {\it Advances in Enzymology Vol. 2} (125--247). 
%     New York, Academic Press.
%  \bibitem[R.~Keohane, 1958]{Keo:58}
%     R.~Keohane (1958).
%     {\it Power and Interdependence: 
%     World Politics in Transition.}
%     Boston, Little, Brown \& Co.
%  \bibitem[Powers, 1985]{Pow:85}
%     T.~Powers (1985).
%     Is there a way out?
%     {\it Harpers, June 1985}, 35--47.

%\end{thebibliography}

\appendix

                                        % in the appendices.
\end{document}